\newtheorem{theorem}{Theorem}[section]
\newtheorem{remark}{Remark}[section]
\newtheorem{lemma}[theorem]{Lemma}
\newtheorem{pro}{Proposition}[section]
\newtheorem{cor}[theorem]{Corollary}
\renewcommand{\div}{ {\rm div }  }
\newcommand{\bt}{\begin{theorem}}
\newcommand{\bl}{\begin{lemma}}
\newcommand{\el}{\end{lemma}}
\newcommand{\et}{\end{theorem}}
\newcommand{\br}{\begin{remark}}
\newcommand{\er}{\end{remark}}
\newcommand{\curl}{{\rm curl} }
\newcommand{\la}{\label}
\newcommand{\bn}{\begin{eqnarray}}
\newcommand{\en}{\end{eqnarray}}
\newcommand{\bnn}{\begin{eqnarray*}}
\newcommand{\enn}{\end{eqnarray*}}
\newcommand{\ba}{\begin{aligned}}
\newcommand{\ea}{\end{aligned}}
\newcommand{\be}{\begin{equation}}
\newcommand{\ee}{\end{equation}}
\def\norm[#1]#2{\|#2\|_{#1}}
\def\la{\label}
\begin{document}

\title{Global Strong Solution With Vacuum to the 2D Nonhomogeneous Incompressible MHD System}

\author{Xiangdi Huang}
\address{Academy of Mathematics and Systems Science, CAS, Beijing 100190, P. R. China \ \ $\&$\ \ Department of Pure and Applied Mathematics, Graduate School of Information Sciences and Technology, Osaka University, Osaka, Japan}
\email{xdhuang@amss.ac.cn}

\author{Yun Wang}
\address{Department of Mathematics and Statistics, McMaster University, 1280 Main Street West, Hamilton, Ontario L8S 4K1, Canada}
\email{yunwang@math.mcmaster.ca}
\date{}
\maketitle

\begin{abstract}In this paper, we first prove the unique global strong solution with vacuum to the two dimensional nonhomogeneous incompressible MHD system, as long as the initial data satisfies some compatibility condition. As a corollary, the global existence of strong solution with vacuum to the 2D nonhomogeneous incompressible Navier-Stokes equations is also established. Our main result improves all the previous results where the initial density need to be strictly positive. The key idea is to use some critical Sobolev inequality of logarithmic type, which is originally due to Brezis-Wainger \cite{Brezis-Wainger}.

Keywords: nonhomogeneous incompressible fluid, strong solution, vacuum.

AMS: 35Q35, 35B65, 76N10
\end{abstract}

\section{Introduction}
Magnetic fields influence many fluids. Magnetohydrodynamics(MHD) is concerned with the interaction between fluid flow and magnetic field. The governing equations of nonhomogeneous MHD can be stated as follows\cite{Davidson},
\be\la{MHD} \left\{  \ba & \rho_t + \div (\rho u) = 0,\ \ \ \mbox{in}\ \Omega\times [0, T),\\
 & (\rho u)_t  + \div (\rho u\otimes u) - \div(2\mu(\rho) d) - (B\cdot \nabla )B + \nabla P = 0,\ \ \mbox{in}\ \Omega\times [0, T),\\
&B_t -\lambda \Delta B - \curl (u \times B) =0,\ \ \ \ \ \mbox{in}\ \Omega\times [0, T),\\
& {\rm div} u =0, \ \ \ \ \div B=0, \ \ \ \ \mbox{in}\ \Omega\times [0, T).\ea
\right.
\ee
Here $\rho$ and $u$ are the density and velocity field of fluid respectively. $P$ is the pressure. $B$ is the magnetic field.  $\mu(\rho)\geq 0$ denotes the viscosity of fluid, which we assume in this paper is a positive constant. $\lambda>0$ is also a constant, which describes the relative strengths of advection and diffusion of $B$. For simplicity of writing, let $\mu = \lambda=1$, $d= \frac12 \left(\nabla u + (\nabla u)^t\right)$ is the deformation tensor.

In this paper, we focus on the system (\ref{MHD}) 
with the initial-boundary conditions
\be\la{boundary-condition}  u = 0,\ \ \ \ B\cdot\vec{n} = 0, \ \ \ {\rm curl}B = 0\ \ \ \mbox{on}\ \partial \Omega\times [0, T),
\ee
\be\la{initial-condition}  (\rho, u, B)|_{t=0} = (\rho_0, u_0, B_0)\ \ \mbox{in}\ \Omega.
\ee
Here $\Omega$ is a bounded smooth domain in $\mathbb{R}^2$.

If there is no magnetic field, i.e., $B=0$, MHD system turns to be nonhomogeneous Navier-Stokes system. In fact, due to the similarity of the second equation and the third equation in \eqref{MHD}, the study for MHD system has been along
with that for Navier-Stokes one. Let's recall some known results for 3D nonhomogeneous Navier-Stokes equations. When the initial density $\rho_0$ is bounded away from 0, the global existence of weak solutions was established by Kazhikov\cite{Kazhikov}, see also \cite{AK}.  Moreover, Antontsev-Kazhikov-Monakhov\cite{AK2} gave the first result on local existence and uniqueness of strong solutions. For the two-dimensional case, they even proved that the strong solution is global.  But the global existence of strong or smooth solutions in 3D is still an open problem. For more results in this direction, see \cite{LS, Salvi, gui-zhang} and references therein.

If the initial density $\rho_0$ allows vacuum, the problem becomes more complicated. Simon\cite{Simon} proved the global existence of weak solutions, see also \cite{Lions}. Choe-Kim\cite{Choe-Kim} constructed a local strong solution under some compatibility conditions on the initial data. More precisely,
they proved that if $(\rho_0$, $u_0)$ satisfy
\be\la{initial-data-NS}
0\leq \rho_0 \in L^{\frac32}(\Omega)\cap H^2(\Omega),\ \ \ u_0\in D_{0}^1(\Omega)\cap D^2(\Omega),
\ee
and the compatibility conditions
\be \la{compatibility-ns}
 {\rm div}u_0=0,\ \ \ \ -\mu \Delta u_0 + \nabla P_0 = \rho_0^{\frac12} g,\ \ \ \mbox{in}\ \Omega,
\ee
with some $(P_0, g)$ belonging to $D^1(\Omega) \times L^2(\Omega)$, then there exists a positive time $T$ and a unique strong solution $(\rho, u)$ $\in C([0, T); H^2(\Omega)) \times C ([0, T);  D_0^1(\Omega) \cap D^2(\Omega))$ to the nonhomogeneous Navier-Stokes equations, where $D_0^1(\Omega)$ and $D^2(\Omega)$ denote the usual homogeneous Sobolev spaces. Recall that $D_0^{1}(\mathbb{R}^3)=
\{u\in L^6(\mathbb{R}^3):\nabla u\in L^2(\mathbb{R}^3)\}$ and
$D_0^1(\Omega)=H_0^1(\Omega)$ if $\Omega\subset\subset\mathbb{R}^3$.

After the local existence of strong solution, one question came out naturally, which is whether the solution blows up in finite time. Suppose the finite blow-up time $T^*$ exists, \cite{Kim} proved the Serrin type
criterion, which says that
\be \la{serrin-criterion}
\int_0^{T^*} \|u(t)\|_{L^r_w}^s dt = \infty,\ \ \ \mbox{for any}\ (r,s) \ \mbox{with}\ \frac{2}{s} + \frac{n}{r}= 1, \ \ n<r\leq \infty,
\ee
where $n$ is the dimension of the domain and $L_w^r$ is the weak $L^r$ space. (The proof was given in \cite{Kim} only for 3D case, but almost the same proof works for 2D case.)  In particular, for the 2D case, it follows from the energy
inequality the solution satisfies that $\sup_{0< T< T^*}(\|\sqrt{\rho} u\|_{L^\infty(0, T; L^2)} + \|\nabla u\|_{L^2(0, T; L^2)} )$ is bounded, which implies that $u\in L^4(0, T^*; L^4)$ if $\rho$ is bounded away from 0. Hence the criterion (\ref{serrin-criterion}) in fact implies global existence of strong solution provided that $\rho_0$ is bounded away from 0. However, if the density is allowed to vanish, whether the strong solution exists globally remains unknown. This is the main problem we shall address in this paper.

Let's go back to the MHD system (\ref{MHD}). As said before, the research for MHD goes along with that for Navier-Stokes equations. The results are similar. When $\rho$ is a constant, which means the fluid is homogeneous, the MHD system has been extensively studied. Duraut-Lions\cite{Duraut-Lions} constructed a class of weak solutions with finite energy and a class of local strong solutions. In particular, the 2D local strong solution has been proved to be global and unique. While for the three-dimensional case, different Serrin type criteria similar to \eqref{serrin-criterion} were given in \cite{He-Xin, He-Wang, Cao-Wu, Zhou-Gala}. As for the 3D Navier-Stokes equations, whether the local strong solution is global is still open.

When the fluid is nonhomogeneous, Gerbeau-Le Bris\cite{GLe}, Desjardins-Le Bris\cite{DLe} studied the global existence of weak solutions of finite energy in the whole space or in the torus. Global existence of strong solutions with small initial data in some Besov spaces was considered by Abidi-Paicu\cite{Abidi-Paicu}. Moreover, \cite{Abidi-Paicu} allowed variable viscosity and conductivity coefficients but required an essential assumption that there is no vacuum (more precisely, the initial data are closed to a constant state). Chen-Tan-Wang\cite{Chen-Tan-Wang} extended the local existence in presence of vacuum. In conclusion, if the initial data satisfies that
\be \la{initial-conditions}
0\leq \rho_0 \in H^2, \ \ \ \   \ (u_0, B_0)\in H^2,
\ee
and the compatibility conditions
\be \la{compatibility-conditions}
\ba
&u_0= 0, \ \ \ B_0\cdot \vec{n}=0,\ \ \ {\rm curl}B_0 = 0,\ \ \ \mbox{on}\ \partial \Omega,\\
&{\rm div}u_0 = {\rm div}B_0 = 0,\ \ \ \ -\Delta u_0 + \nabla P_0 - (B_0\cdot \nabla )B_0= \rho_0^{\frac12}g,\ \ \ \ \mbox{in}\ \Omega,
\ea\ee
with some $(P_0, g)\in H^1\times L^2$, then there exist a positive time $T$ and a unique strong solution $(\rho, u, B)$ to the problem \eqref{MHD}-\eqref{initial-condition}, such that
\be\la{strong-solution} \ba &\rho \in C([0, T]; H^2), \ \ \ (u, B) \in C([0, T]; H^2),\\
&  \ p\in C([0, T]; H^1)\cap L^2(0, T; H^2),\ \ \  (u_t, B_t) \in L^2(0, T; H^1), \\
& \mbox{and}\ \  (\rho_t, \sqrt{\rho}u_t, B_t) \in L^\infty(0, T; L^2).
\ea
\ee
For all the techniques, refer to \cite{Cho-Kim}.

It comes to the question whether the local strong solution blows up. After the proof of \cite{Kim} for nonhomogeneous Navier-Stokes equations, one can get the same criterion \eqref{serrin-criterion} for nonhomogeneous MHD, see also \cite{Zhou-Fan}. In particular, for the 2D case, it says that $\|u\|_{L^2_t L^\infty_x}$ becomes unbounded once the local strong solution blows up.  On the other hand, the energy inequality tells us $\|\nabla u\|_{L^2_t L^2_x}$ is uniformly bounded, which only imply that $\|u\|_{L^2_t ( BMO_x)}$ is uniformly bounded. Therefore, in view of the blowup criterion (\ref{serrin-criterion}), it's not enough to extend the local strong solution to global one. To improve the regularity of the velocity, we choose to apply a critical Sobolev inequality of logarithmic type, which is originally due to Brezis-Gallouet\cite{Brezis-Gallouet} and Brezis-Wainger\cite{Brezis-Wainger}. In this paper, we use some extension, which was proved by Ozawa\cite{Ozawa}. For a new proof, see \cite{Kozono-Ogawa-Taniuchi}. The inequality is stated as follow,
\bl
Assume $f\in H^1(\mathbb{R}^2) \cap W^{1, q}(\mathbb{R}^2)$, with some $q> 2$. Then it holds that
\be\la{critical-inequality}
\|f\|_{L^\infty(\mathbb{R}^2)} \leq C\left( 1 + \|\nabla f\|_{L^2(\mathbb{R}^2)} \left( \ln^+ \|f\|_{W^{1, q}(\mathbb{R}^2)} \right)^{\frac12} \right),
\ee
with some constant $C$ depending only on $q$.
\el

The same proof with some proper extension theorem(see \cite{Adam}),  in fact gives the following modified inequality, which involves the integral with respect to time. For 
completeness, we will give the proof in Section 2.
\bl \la{critical-inequality-lemma} Assume $\Omega$ is a bounded smooth domain in $\mathbb{R}^2$ and $f\in L^2(s, t; H^1(\Omega))\cap L^2(s, t; W^{1, q}(\Omega))$, with some $q>2$ and $0\leq s <t \leq \infty$. Then it holds that
\be\la{critical-inequality-time}
\|f\|_{L^2(s, t; L^\infty(\Omega))} \leq C\left(1 + \|f\|_{L^2(s, t;H^1(\Omega)) } \left(\ln^+ \|f\|_{L^2(s, t; W^{1, q}(\Omega))}   \right)^{\frac12}\right),
\ee
with some constant $C$ depending only on $q$ and $\Omega$, and independent of $s, t$.
\el

The application of\eqref{critical-inequality-time} is the key idea of this paper. Due to this, we can close the estimates for $\|(u, B)\|_{L^\infty_t H^1_x}$. The higher order estimates are in the same spirit of \cite{Kim}. For more details, see Section 3.  Finally, we get the result about global existence of strong solution.
\bt\la{main-result}  Assume that the initial data $(\rho_0, u_0, B_0)$ satisfies \eqref{initial-conditions} and the compatibility conditions \eqref{compatibility-conditions}. Then there exists a global strong solution
$(\rho, u, B)$ of the MHD system \eqref{MHD}-\eqref{initial-condition}, with
\be\la{global-solution}
\ba
& \rho \in C([0, \infty); H^2), \ \ \ (u, B) \in C([0, \infty); H^2),\\ & P\in C([0, \infty) ; H^1)\cap L^2_{loc}(0, \infty; H^2), \ \ \
 (u_t, B_t) \in L^2_{loc}(0, \infty; H^1), \\
& \mbox{and}\ \  (\rho_t, \sqrt{\rho}u_t, B_t) \in L^\infty_{loc}(0, \infty ; L^2).
\ea
\ee
\et

Some remarks are given about this theorem.
\br
The local existence of unique strong solution with vacuum to the system (\ref{MHD})  in a two-dimensional bounded domain can be established in the same manner as \cite{Choe-Kim} and \cite{Chen-Tan-Wang}. Through this paper, we will concentrate on establishing global estimates for the density, velocity and magnetic field.
\er

\br
 If we consider the most special case, where $\rho$ is a constant(the fluid is homogeneous) and $B=0$(no magnetic field), then the system \eqref{MHD} becomes the classical Navier-Stokes system. The global
existence of strong solution has been proved by Leray\cite{Leray}. More generally, if we consider the case that only $\rho$ is a constant, the system \eqref{MHD} becomes the classical homogeneous MHD system. As said before, the corresponding result has been derived by Duraut-Lions\cite{Duraut-Lions}.
\er

If $B=0$, Theorem \ref{main-result} in fact gives a positive answer to the global existence of strong solutions with vacuum of the 2D nonhomogeneous Navier-Stokes system. It covers the corresponding result in \cite{AK2}, where the density is strictly positive.

\begin{cor}
  Assume that the initial data $(\rho_0, u_0)$ satisfies \eqref{initial-conditions} and the compatibility conditions \eqref{compatibility-ns}. Then there exists a global strong solution
$(\rho, u)$ of the Navier-Stokes equations, with
\be\la{global-solution-ns}
\ba
& \rho \in C([0, \infty); H^2), \ \ \ u \in C([0, \infty); H^2),\\ & P\in C([0, \infty) ; H^1)\cap L^2_{loc}(0, \infty; H^2),\ \ \
 u_t \in L^2_{loc}(0, \infty; H^1), \\  &\mbox{and}\ \  (\rho_t, \sqrt{\rho}u_t) \in L^\infty_{loc}(0, \infty ; L^2).
\ea
\ee
\end{cor}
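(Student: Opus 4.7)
The plan is to derive the corollary directly from Theorem \ref{main-result} by simply extending the Navier-Stokes initial data with a vanishing magnetic field and invoking uniqueness to eliminate $B$ from the resulting MHD solution. First, I would set $B_0 \equiv 0$ and check that the enlarged triple $(\rho_0, u_0, B_0)$ meets the hypotheses of Theorem \ref{main-result}. The regularity requirement \eqref{initial-conditions} is immediate since $0 \in H^2$. The boundary and divergence-free conditions in \eqref{compatibility-conditions} trivially hold for $B_0 = 0$, and the interior identity
\[
-\Delta u_0 + \nabla P_0 - (B_0 \cdot \nabla) B_0 = \rho_0^{1/2} g
\]
reduces to the Navier-Stokes compatibility condition \eqref{compatibility-ns}, which is assumed. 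Hence Theorem \ref{main-result} produces a global strong solution $(\rho, u, B)$ of \eqref{MHD}--\eqref{initial-condition} with the regularity stated in \eqref{global-solution}.

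Next I would argue that $B \equiv 0$ throughout $\Omega \times [0, \infty)$. Given the velocity field $u$ from the solution, the magnetic equation
\[
B_t - \Delta B - \operatorname{curl}(u \times B) = 0, \quad B \cdot \vec{n} = 0, \quad \operatorname{curl} B = 0 \text{ on } \partial \Omega, \quad B|_{t=0} = 0
\]
is a linear parabolic problem for $B$. The trivial function $B \equiv 0$ is clearly a solution in the class \eqref{strong-solution}, and standard energy estimates for this linear problem (multiply by $B$, integrate by parts using the boundary conditions, and apply Gronwall) give uniqueness in that class. Alternatively, one can simply invoke the uniqueness part of the local existence result for the full MHD system quoted from \cite{Chen-Tan-Wang, Cho-Kim}, combined with a continuation/bootstrap argument on the interval where Theorem \ref{main-result} supplies control. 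Either way, $B$ must coincide with the zero solution.

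With $B \equiv 0$, the second equation of \eqref{MHD} loses the Lorentz term $(B \cdot \nabla) B$ and the system reduces to the nonhomogeneous Navier-Stokes equations, while the regularity class \eqref{global-solution} for $(\rho, u)$ becomes exactly \eqref{global-solution-ns}. This yields the desired global strong solution and completes the proof of the corollary.

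The only potentially delicate point is the rigorous justification of $B \equiv 0$, since the coefficient $u$ in the induction equation is not smooth but only lies in $L^2_{\mathrm{loc}}(0, \infty; H^2)$ with $u_t \in L^2_{\mathrm{loc}}(0, \infty; H^1)$. However, this regularity is far more than enough to run a standard $L^2$ energy estimate for the difference of two solutions of the linear $B$-equation, so no real obstacle arises; everything else is essentially bookkeeping.
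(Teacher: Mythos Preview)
Your proposal is correct and matches the paper's approach: the corollary is stated immediately after the remark ``If $B=0$, Theorem \ref{main-result} in fact gives a positive answer\ldots'' and is not given a separate proof, so the intended argument is precisely to specialize Theorem \ref{main-result} to $B_0\equiv 0$ and observe that the resulting solution has $B\equiv 0$. Your additional care in justifying $B\equiv 0$ via the linear energy estimate (or via the uniqueness part of the local theory) is a reasonable elaboration of what the paper leaves implicit.
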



We conclude this section with some notations and lemmas.
$L^r(\Omega), W^{k, r}(\Omega)$, $(1\leq r\leq \infty)$, are the standard Sobolev spaces, and we use $L^r=L^r(\Omega)$, $W^{k, r}= W^{k,r}(\Omega)$.
Especially, when $r=2$, denote $H^k = W^{k, 2}$.
For simplicity, let
$$\int f dx \triangleq \int_{\Omega} f dx. $$

Some more lemmas will be used during the proof of Theorem \ref{main-result}. One is following from the regularity theory for Stokes equations. For its proof, refer to \cite{Galdi}.
\bl\la{Galdi} Assume that $(u, P)\in H_0^1 \times H^1$ is a weak solution of the stationary Stokes equations,
\be\la{Stokes}\left\{
\ba
& -\Delta u + \nabla P = F,\ \ \ \mbox{in}\ \Omega,\\
&{\rm div}u = 0,\ \ \ \ \ \mbox{in}\ \Omega,\\
&u=0, \ \ \ \ \ \ \mbox{on}\ \partial\Omega,
\ea\right.
\ee
and $F\in L^q$, $1<q<\infty$. Then it holds that
\be
\|u\|_{W^{2, q}} \leq C\|F\|_{L^q} + C\|u\|_{H^1},
\ee
with some constant $C$ depending on $\Omega$ and $q$. Moreover, if $F\in H^1$, then
\be
\|u\|_{H^3} \leq C \|F\|_{H^1} + C\|u\|_{H^1},
\ee
with some constant $C$ depending only on $\Omega$.
\el

The other lemma is responsible for the estimates for $B$ and follows from the classical regularity theory for elliptic equations. For its proof, refer to \cite{Nirenberg}.
\bl \la{Nirenberg} Assume that $B\in H^1$ is a weak solution of the Poisson equations
\be\la{Poisson} \left\{ \ba
& \Delta B= G,\ \ \ \mbox{in}\ \Omega,\\
& B\cdot \vec{n}= 0, \ \ \ {\rm curl}B= 0, \ \ \ \mbox{on}\ \partial \Omega,
\ea
\right.
\ee
and $G\in L^q$, $1<q<\infty$. Then it holds that
\be
\|B\|_{W^{2, q}} \leq C \|G\|_{L^q} + C \|B\|_{H^1},
\ee
with some constant $C$ depending on $\Omega$ and $q$. Moreover, if $G\in H^1$, then
\be
\|B\|_{H^3} \leq C \|G\|_{H^1} + C\|B\|_{H^1},
\ee
with some constant $C$ depending only on $\Omega$.
\el

\section{Proof of Lemma 1.2}
This section is dedicated to the proof of Lemma 1.2. First we will prove the inequality \eqref{critical-inequality-time} for the whole space case, which is 
\be\la{critical-whole-space}
\|f\|_{L^2(s, t; L^\infty(\mathbb{R}^2))} \leq C\left(1 + \|f\|_{L^2(s, t;H^1(\mathbb{R}^2)) } \left(\ln^+ \|f\|_{L^2(s, t; W^{1, q}(\mathbb{R}^2))}   \right)^{\frac12}\right).
\ee
The proof follows exactly that in \cite{Kozono-Ogawa-Taniuchi} and lies mainly on the Littlewood-Paley decomposition. So we introduce here some new notations associated with the decomposition. 
Define $\mathcal{C}$ to be the ring
$$\mathcal{C}= \left\{ \xi \in \mathbb{R}^2:\ \ \frac34 \leq |\xi| \leq \frac83    \right\},$$
and define $\mathcal{D}$ to be the ball
$$\mathcal{D}= \left\{\xi \in \mathbb{R}^2: \ \ \  |\xi|\leq \frac43   \right\}.$$
Let $\chi$ and $\varphi$ be two smooth nonnegative radial functions supported respectively in $\mathcal{D}$ and $\mathcal{C}$, such that
$$\chi(\xi) + \sum_{q\in \mathbb{N}} \varphi(2^{-q} \xi)= 1\ \ \mbox{for}\ \xi \in \mathbb{R}^2,\ \ \mbox{and}\ \ \sum_{q\in \mathbb{Z}}\varphi(2^{-q}\xi )= 1\ \ \mbox{for}\ \xi \in \mathbb{R}^2 
\setminus \{0\}.$$
Denote the Fourier transform on $\mathbb{R}^2$ by $\mathcal{F}$ and denote
$$h= \mathcal{F}^{-1} \varphi,\ \ \ \ \ \ \tilde{h}= \mathcal{F}^{-1} \chi.$$
The frequency localization operator is defined by 
$$\Delta_q f = \mathcal{F}^{-1} \left[ \varphi(2^{-q} \xi )\mathcal{F} (f) \right] = 2^{2q} \int_{\mathbb{R}^2} h(2^q y) f (x-y ) dy,$$
and 
$$S_q f = \mathcal{F}^{-1} \left[ \chi(2^{-q}\xi ) \mathcal{F}(f)     \right]= 2^{2q} \int_{\mathbb{R}^2} \tilde{h}(2^q y) f(x-y) dy. $$

Now it's ready to prove \eqref{critical-whole-space}.
\begin{proof}
Decompose $f$ into three parts such as
\be \ba
f(x, \tau) & = S_{-N-1} f(x, \tau) + \sum_{|j|\leq N} \Delta_{j} f(x, \tau) + \sum_{j>N }\Delta_j f(x, \tau)\\
& = f_1(x, \tau) + f_2(x, \tau) + f_3(x, \tau).
\ea
\ee
By Bernstein's inequality(see \cite{Chemin}), 
\be
\|f_1\|_{L^2(s, t; L^\infty)}\leq   C^{-2N/q}\|f\|_{L^2(s, t; L^q)},\ \ \ q\in [1, \infty).
\ee
Similarly, 
\be\ba
\|f_2\|_{L^2(s, t; L^\infty) } & \leq  \sum_{|j|\leq N} \|\Delta_j f\|_{L^2(s, t; L^\infty)} \\
&  \leq C N^{\frac12} \left( \| \nabla (\Delta_j f )   \|_{L^2(s, t; L^2)}^2 \right)^{\frac12}\\
& \leq CN^{\frac12} \|\nabla f\|_{L^2(s, t; L^2)},
\ea\ee
and 
\be \ba
\|f_3\|_{L^2(s, t; L^\infty)} & \leq \sum_{j > N } \|\Delta_j  f\|_{L^2(s, t; L^\infty)} \\
& \leq  C\sum_{j>N} 2^{2j (1/q - 1/2)} \|\nabla f\|_{L^2(s, t; L^q)}\\
& =  C 2^{(2/q- 1)N} \|\nabla f\|_{L^2(s, t; L^q)}.
\ea\ee

If we set $\kappa = \min (2/q, \ 2(1/2-1/q))$, then 
\be
\|f\|_{L^2(s, t; L^\infty)} \leq C \left\{  2^{-\kappa N} \|f\|_{L^2(s, t; W^{1,q})} + N^{\frac12} \|\nabla f\|_{L^2(s, t; L^2)}  \right\}.
\ee

Choose $N = \left[ \log_{2^\kappa} \frac{ \|f\|_{L^2(s, t; W^{1,q}) }}{\|\nabla f\|_{L^2(s, t; L^2)}}\right]+ 1$, hence
we derive that
\be
\|f\|_{L^2(s, t; L^\infty)} \leq C \|\nabla f\|_{L^2(s, t; L^2)} \left( 1 + \left( \ln^+ \frac{ \|f\|_{L^2(s, t; W^{1,q}) }}{\|\nabla f\|_{L^2(s, t; L^2)}}         \right)^{1/2} \right),
\ee
which implies \eqref{critical-whole-space}.
\end{proof}

Combining the extension theorem(see \cite{Adam}) and \eqref{critical-whole-space}, we prove Lemma 1.2.

\section{Proof of Theorem 1.3}
This section is dedicated to the proof of Theorem 1.3. Define the quantity $\Phi(T)$ as follow,
\be\la{Phi}\ba
\Phi(T)& =  \sup_{0\leq t\leq T}\left(\| \rho(t)\|_{H^2}^2 + \|u(t)\|_{H^2}^2 + \|B(t)\|_{H^2}^2 \right) + \|\sqrt{\rho} u_t\|_{L^\infty(0, T; L^2)}^2\\
& \ \ \ + \int_0^T \left(\|u(t)\|_{H^3}^2 + \|B(t)\|_{H^3}^2 \right)dt+ \int_0^T\left( \|u_t\|_{H^1}^2 + \|B_t\|_{H^1}^2\right)dt.
\ea\ee

Suppose the local strong solution blows up at $T^*<\infty$, we will prove that in fact there exists a generic constant $\bar{M}<\infty$ depending only the initial data and $T^*$ such that
\be \la{final} \sup_{0\leq T < T^*} \Phi(T)\le\bar{M}. \ee
 Having (\ref{final}) at hand, it is easy to show without many difficulties that we can extend the strong solution beyond $T^*$, which gives a contradiction. Hence the local strong solution does not blow up in finite time. Also, the uniqueness of strong solutions is a standard procedure. 
 
 Through out this section, $C$  denote a generic constant only depending on the initial data and $T^*$. The proof is divided into five steps, due to different level estimates.

Before proceeding, we write another equivalent form of \eqref{MHD} for convenience, which is
\be\la{MHD-new}\left\{\ba & \rho_t + u\cdot \nabla \rho = 0,\\
&\rho u_t -  \Delta u + (\rho u\cdot \nabla )u - (B\cdot \nabla ) B + \nabla P =0, \\
&B_t - \Delta B +  (u\cdot \nabla ) B - (B\cdot \nabla )u = 0,\\
&\div u =0, \ \ \ \div B =0.
\ea
\right. \ee

Now we start the proof of Theorem \ref{main-result}.

\vspace{2mm}{\bf Step I\ \ $L^\infty$ bound for $\rho$.}\ The equation $\eqref{MHD-new}_1$ for density is  a transport equation, then for every $0\leq t< T^*$,
\be \la{density}
\|\rho(t)\|_{L^\infty} = \|\rho_0\|_{L^\infty}.
\ee

\vspace{2mm}{\bf Step II\ \ Basic energy estimate}
\begin{pro}[Energy inequality] There exists a constant $M$ depending only on $\|\sqrt{\rho_0} u_0\|_{L^2}$ and $\|B_0\|_{L^2}$, such that for every $0< T< T^*$,
\be\la{energy-estimate}\|\sqrt{\rho} u \|_{L^\infty(0, T; L^2 )}^2 + \|B\|_{L^\infty(0, T; L^2)}^2  + \int_0^T  \|\nabla u\|_{L^2}^2 dt +   \int_0^T \|\nabla B\|_{L^2}^2 dt
\leq M.\ee
\end{pro}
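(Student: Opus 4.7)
The plan is to derive the estimate by the classical energy method for MHD: test the momentum equation against $u$, test the induction equation against $B$, and add. I will first rewrite everything using the equivalent form \eqref{MHD-new}.

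First I would multiply $\eqref{MHD-new}_2$ by $u$ and integrate over $\Omega$. Using the transport equation $\eqref{MHD-new}_1$ together with $\div u = 0$ in the standard way, the convective and time-derivative terms combine into $\tfrac{1}{2}\tfrac{d}{dt}\int\rho|u|^2\,dx$. Integration by parts in the diffusion term, together with $u=0$ on $\p\Omega$, produces $\int|\na u|^2\,dx$. The pressure term $\int\na P\cdot u\,dx$ drops out because $\div u=0$ and $u|_{\p\Omega}=0$. The only remaining term from this equation is the coupling $-\int(B\cdot\na)B\cdot u\,dx$.

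Next I would multiply $\eqref{MHD-new}_3$ by $B$ and integrate. The time derivative yields $\tfrac{1}{2}\tfrac{d}{dt}\int|B|^2\,dx$. The convective term $\int(u\cdot\na)B\cdot B\,dx=\tfrac12\int u\cdot\na|B|^2\,dx$ vanishes via $\div u=0$ and $u|_{\p\Omega}=0$. For the diffusion term, using $\div B=0$ I would rewrite $-\Delta B=\curl\curl B$ and integrate by parts; the boundary conditions $B\cdot\vec n=0$ and $\curl B=0$ on $\p\Omega$ kill the boundary contribution, leaving $\int|\curl B|^2\,dx$, which by the standard div-curl identity (together with the boundary conditions) is equivalent to $\int|\na B|^2\,dx$. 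The remaining term is the other coupling $-\int(B\cdot\na)u\cdot B\,dx$.

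The key step is then to show the two coupling terms cancel when added. Integrating by parts in $\int(B\cdot\na)B\cdot u\,dx = \int B_j\p_j B_i\, u_i\,dx$, using $\div B=0$ to discard one term and using $u|_{\p\Omega}=0$ to kill the boundary integral $\int_{\p\Omega}(B\cdot\vec n)(u\cdot B)\,dS$ (which also vanishes from $B\cdot\vec n=0$), I get
\[
\int(B\cdot\na)B\cdot u\,dx = -\int(B\cdot\na)u\cdot B\,dx,
\]
so the two coupling contributions to the summed identity cancel exactly.

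Adding the two identities therefore yields
\[
\tfrac{1}{2}\tfrac{d}{dt}\bigl(\|\sqrt{\rho}\,u\|_{L^2}^2+\|B\|_{L^2}^2\bigr)+\|\na u\|_{L^2}^2+c\|\na B\|_{L^2}^2=0
\]
for some $c>0$. Integrating in $t\in[0,T]$ and using $\|\sqrt{\rho(t)}\,u(t)\|_{L^2}^2\le \|\sqrt{\rho_0}\,u_0\|_{L^2}^2$ (which drops out of the calculation automatically) gives \eqref{energy-estimate} with $M$ depending only on $\|\sqrt{\rho_0}\,u_0\|_{L^2}$ and $\|B_0\|_{L^2}$. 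The only subtle point is the cancellation of the Lorentz-type coupling terms, for which one must carefully use all three structural facts $\div B=0$, $B\cdot\vec n|_{\p\Omega}=0$ and $u|_{\p\Omega}=0$; the rest of the estimate is routine.
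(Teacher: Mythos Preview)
Your proposal is correct and follows exactly the paper's approach: multiply $\eqref{MHD-new}_2$ by $u$, $\eqref{MHD-new}_3$ by $B$, integrate, and use the cancellation of the coupling terms $(B\cdot\nabla)B\cdot u$ and $(B\cdot\nabla)u\cdot B$. Your write-up is in fact more careful than the paper's (which simply says ``integration by parts was applied''), in particular in noting that the magnetic dissipation term yields $\int|\curl B|^2$ and only then an equivalent multiple of $\int|\nabla B|^2$ via the div--curl identity and the boundary conditions.
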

\begin{proof} The proof is standard.
Multiplying $\eqref{MHD-new}_2$ and $\eqref{MHD-new}_3$ by $u$ and $B$ respectively, then adding the two resulting equations together, integrating over $\Omega$,  one can get that
\be
\frac{1}{2}\frac{d}{dt} \int \rho |u|^2dx +\frac12\frac{d}{dt} \int |B|^2 dx + \int |\nabla u|^2 dx  + \int |\nabla B|^2 dx =0,
\ee
where integration by parts was applied. It implies that the inequality \eqref{energy-estimate} holds and consequently completes the proof.
\end{proof}

{\bf Step III\ \ Estimates for $\|(\sqrt{\rho}u_t,\ B_t)\|_{L^2(0, T;  L^2)}$ and $\|(\nabla u, \nabla B)\|_{L^\infty(0, T; L^2)}.$}

 This is a crucial step during the proof. Higher order estimates of the density, velocity and magnetic field can be done in a standard way provided that
$\|(u,\ B)\|_{H^1}$ is uniformly bounded with respect to time. To prove that, we will make use of some extension of critical Sobolev inequality of logarithmic type, as indicated by Lemma \ref{critical-inequality-lemma}.

\begin{pro} \la{First-Level}Under the assumptions in Theorem \ref{main-result}, it holds that
\be\la{first-level}
\sup_{0<T<T^*}\left \{\|(u(T),\  B(T))\|_{H^1}^2  + \int_0^T \|(\sqrt{\rho}u_t,\ B_t)\|_{L^2}^2 dt \right\}< \infty.
\ee
\end{pro}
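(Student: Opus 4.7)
The plan is to test the momentum equation with $u_t$ and the magnetic-field equation with $B_t$, add the two identities, and derive a Gronwall-type differential inequality for $E(t) := \|\nabla u(t)\|_{L^2}^2 + \|\nabla B(t)\|_{L^2}^2$ that can be closed via the logarithmic Sobolev inequality of Lemma \ref{critical-inequality-lemma}. Using the boundary conditions (so that $\int \Delta B\cdot B_t\,dx = -\tfrac12 \tfrac{d}{dt}\|\nabla B\|_{L^2}^2$ thanks to $\curl B|_{\partial\Omega} = 0$ and the 2D identity $\|\nabla B\|_{L^2}^2 = \|\curl B\|_{L^2}^2 + \|\div B\|_{L^2}^2$ valid when $B\cdot\vec{n}|_{\partial\Omega}=0$), the energy identity reads
\begin{equation*}
\tfrac12 \tfrac{d}{dt}E(t) + \|\sqrt{\rho}u_t\|_{L^2}^2 + \|B_t\|_{L^2}^2 = J_1 + J_2 + J_3 + J_4,
\end{equation*}
with $J_1 = -\int \rho(u\cdot\nabla)u\cdot u_t$, $J_2 = \int (B\cdot\nabla)B\cdot u_t$, $J_3 = -\int (u\cdot\nabla)B\cdot B_t$, and $J_4 = \int (B\cdot\nabla)u\cdot B_t$.

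The terms $J_1$, $J_3$, $J_4$ yield to H\"older, Gagliardo--Nirenberg, and Young, contributing at most $\tfrac14(\|\sqrt{\rho}u_t\|^2+\|B_t\|^2) + C(\|u\|_{L^\infty}^2+\|B\|_{L^\infty}^2)E(t)$. The delicate term is $J_2$, where the absence of a $\sqrt{\rho}$ weight on $u_t$ prevents a direct bound under vacuum. I would handle it by integrating by parts in time: since $\div B = 0$,
\begin{equation*}
J_2 = -\int B\otimes B:\nabla u_t\,dx = -\tfrac{d}{dt}\int B\otimes B:\nabla u\,dx + \int (B_t\otimes B + B\otimes B_t):\nabla u\,dx,
\end{equation*}
with the last summands bounded by $\tfrac14\|B_t\|^2 + C\|B\|_{L^\infty}^2\|\nabla u\|^2$. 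Moving the time-derivative term to the left defines a modified energy $\widetilde E(t) := \tfrac12 E(t) + \int B\otimes B:\nabla u$, which, via $\|B\|_{L^4}^2 \le C\|B\|_{L^2}\|\nabla B\|_{L^2}$, the uniform $L^2$-bound on $B$ from the basic energy estimate, and Young, differs from $\tfrac12 E(t)$ by a controlled perturbation. The net output is the key inequality
\begin{equation*}
\tfrac{d}{dt}\widetilde E(t) + \tfrac12\bigl(\|\sqrt{\rho}u_t\|_{L^2}^2 + \|B_t\|_{L^2}^2\bigr) \le C\bigl(\|u\|_{L^\infty}^2 + \|B\|_{L^\infty}^2\bigr) E(t).
\end{equation*}

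To close the estimate via Gronwall, it remains to bound $\int_0^T(\|u\|_{L^\infty}^2+\|B\|_{L^\infty}^2)\,dt$. This is where Lemma \ref{critical-inequality-lemma} enters: with $q = 4$ and the $L^2_tH^1_x$ control already supplied by the basic energy inequality, the matter reduces to controlling $\|u\|_{L^2(0,T;W^{1,4})}$ and $\|B\|_{L^2(0,T;W^{1,4})}$. I would bound these by Gagliardo--Nirenberg $\|\nabla u\|_{L^4}^2 \le C\|\nabla u\|_{L^2}\|\nabla^2 u\|_{L^2}$ combined with the Stokes regularity Lemma \ref{Galdi} applied to $-\Delta u + \nabla P = -\rho u_t - \rho(u\cdot\nabla)u + (B\cdot\nabla)B$, which controls $\|\nabla^2 u\|_{L^2}$ by $\|\sqrt{\rho}u_t\|_{L^2}$ plus nonlinearities absorbable after Young; the analogous argument via Lemma \ref{Nirenberg} handles $B$.

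The main obstacle is the resulting circular dependence: the coefficient $\|u\|_{L^\infty}^2+\|B\|_{L^\infty}^2$ in the Gronwall inequality depends, via the $L^2_tW^{1,4}_x$ bounds and Stokes/elliptic regularity, on $\widetilde E(t)$ and on the dissipation $\int(\|\sqrt{\rho}u_t\|^2+\|B_t\|^2)$ that it is meant to control. The decisive feature of Lemma \ref{critical-inequality-lemma} is that this self-dependence enters only logarithmically; after substitution, the inequality takes an Osgood-type form roughly $\widetilde E'(t) \le Cf(t)\widetilde E(t)\ln^+(e + \widetilde E(t))$ with $f \in L^1(0, T^*)$, which admits a double-exponential a priori bound. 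Any polynomial dependence would destroy this argument; the logarithmic scaling is precisely what keeps $\widetilde E$ finite on $[0, T^*)$, yielding \eqref{first-level}.
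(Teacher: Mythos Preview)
Your strategy coincides with the paper's: test $\eqref{MHD-new}_2$ with $u_t$ and $\eqref{MHD-new}_3$ with $B_t$, handle the vacuum-sensitive term $\int (B\cdot\nabla)B\cdot u_t$ by moving a time derivative onto $\int (B\cdot\nabla)u\cdot B$ (your $\widetilde E$ is exactly the paper's modified energy), and then close the resulting Gronwall inequality via Lemma~\ref{critical-inequality-lemma} together with the $W^{1,4}$ bounds supplied by Lemmas~\ref{Galdi} and~\ref{Nirenberg}.

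The one place where your write-up is imprecise is the final closing. The substitution does \emph{not} yield a pointwise Osgood inequality of the form $\widetilde E'(t)\le Cf(t)\,\widetilde E(t)\ln(e+\widetilde E(t))$: the argument of the logarithm is $\|u(t)\|_{W^{1,4}}+\|B(t)\|_{W^{1,4}}$, which via Stokes/elliptic regularity contains $\|\sqrt{\rho}u_t(t)\|_{L^2}+\|B_t(t)\|_{L^2}$, and these dissipation terms are not controlled by $\widetilde E(t)$ pointwise in time. What the paper actually does is integrate the differential inequality from $s$ to $T$, apply Gronwall, and then apply Lemma~\ref{critical-inequality-lemma} on the interval $[s,T]$ to obtain
\[
\Psi(T)\ \le\ C\,\Psi(s)\,\bigl(C\,\Psi(T)\bigr)^{\,C_2\|(u,B)\|_{L^2(s,T;H^1)}^2},
\]
with $\Psi$ as in \eqref{first-level-9}. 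The exponent is then driven below $\tfrac12$ by choosing $s$ close enough to $T^*$, using the basic energy bound $\int_0^{T^*}(\|\nabla u\|_{L^2}^2+\|\nabla B\|_{L^2}^2)<\infty$, after which $\Psi(T)$ is absorbed into the left side. This time-localization/absorption step is the actual mechanism that closes the estimate, not a pointwise Osgood integration; your heuristic is morally right, but the argument has to be run in integrated form.
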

\begin{proof}
Multiplying the equation $\eqref{MHD-new}_2$ by $u_t$ and integrating over $\Omega$ lead to
\be\la{first-level-1}
\frac12\frac{d}{dt} \int |\nabla u|^2 dx + \int \rho |u_t|^2 dx =
- \int (\rho u\cdot \nabla u)\cdot u_t dx + \int (B\cdot \nabla) B\cdot u_t dx.
\ee
By H\"older's inequality and Young inequality,
\be\la{first-level-2}\ba
\left| \int (\rho u \cdot \nabla) u \cdot u_t dx \right|&  \leq C\|\sqrt{\rho} u_t\|_{L^2 } \cdot \|u\|_{L^\infty} \cdot \|\nabla u\|_{L^2}\\
& \leq \frac{1}{2} \|\sqrt{\rho}u_t\|_{L^2}^2 + C\|u\|_{L^\infty}^2 \|\nabla u\|_{L^2}^2.
\ea
\ee
Applying integration by parts with the conditions that ${\rm div} B=0$ in $\Omega$ and $B\cdot \vec{n}= 0$ on $\partial \Omega$, then
\be \la{first-level-3}\ba &\int (B\cdot \nabla) B\cdot u_t dx \\
 =& \frac{d}{dt}\int (B\cdot \nabla ) B\cdot u dx - \int (B_t \cdot \nabla )B \cdot u dx - \int (B\cdot \nabla ) B_t \cdot u dx\\
=& -\frac{d}{dt} \int (B\cdot \nabla ) u \cdot B dx + \int (B_t \cdot \nabla ) u \cdot B dx + \int (B\cdot \nabla ) u \cdot B_t dx\\
\leq & - \frac{d}{dt} \int (B\cdot \nabla ) u \cdot B dx + C\|B\|_{L^\infty}^2\|\nabla u\|_{L^2}^2 + \frac12 \|B_t\|_{L^2}^2.\ea \ee
Hence, combining \eqref{first-level-1}-\eqref{first-level-3}, we get that
\be\la{first-level-4} \ba
&\frac12 \|\sqrt{\rho} u _t\|_{L^2}^2 + \frac12 \frac{d}{dt}\int |\nabla u|^2 dx + \frac{d}{dt}\int (B\cdot \nabla ) u \cdot B dx \\
\leq & C\left( \|u\|_{L^\infty}^2 + \|B\|_{L^\infty}^2\right) \|\nabla u\|_{L^2}^2 + \frac12 \|B_t\|_{L^2}^2.
\ea\ee

Similarly, multiplying the equation $\eqref{MHD-new}_3$ by $B_t$ and integrating over $\Omega$ lead to
\be\la{first-level-5}
\ba & \frac12 \frac{d}{dt} \int |\nabla B|^2 dx + \int |B_t|^2 dx  \\
 = & - \int (u \cdot \nabla B) \cdot B_t dx + \int (B\cdot \nabla ) u \cdot B_t dx \\
 \leq &  \frac12 \|B_t\|_{L^2}^2 + C\|u\|_{L^\infty}^2 \|\nabla B\|_{L^2}^2 + C\|\nabla u\|_{L^2}^2 \|B\|_{L^\infty}^2,
\ea
\ee
which implies that
\be \la{first-level-6}
 \frac{d}{dt}\int |\nabla B|^2 dx +  \|B_t\|_{L^2}^2 \leq C\|u\|_{L^\infty}^2 \|\nabla B\|_{L^2}^2 + C\|B\|_{L^\infty}^2 \|\nabla u\|_{L^2}^2.
\ee

The term $\int (B\cdot \nabla ) u \cdot B dx $ on the left hand of \eqref{first-level-4} can not be determined positive or negative, so  we choose some appropriate positive terms to control it.  Note that it follows from Gagliardo-Nirenberg inequality that
\be \la{first-level-7} \ba
 \left|\int (B\cdot \nabla ) u \cdot B dx \right|
 \leq & \|B\|_{L^4}^2 \|\nabla u\|_{L^2}\\
 \leq &  C\|B\|_{L^2} \|B\|_{H^1} \|\nabla u\|_{L^2} \\
\leq & \frac{1}{4}\|\nabla u \|_{L^2}^2 + C_1 \|B\|_{L^2}^2 (\|B\|_{L^2}^2 + \|\nabla B\|_{L^2}^2).
\ea\ee
 Next, we multiply \eqref{first-level-6} by $2C_1 M +2$, where $C_1$ and $M$ are constants appearing in \eqref{first-level-7} and \eqref{energy-estimate}, add it to \eqref{first-level-4} and integrate
with respect to time, then for every $0\leq s <T<T^* $,
\be\la{first-level-8} \ba
&\int |\nabla u(T) |^2 dx + \int |\nabla B(T)|^2 dx +  \int_s^T \|\sqrt{\rho}u_t\|_{L^2}^2 d\tau + \int_s^T \|B_t\|_{L^2}^2 d\tau \\
\leq &  C \left[ \int |\nabla u(s) |^2 dx + \int |\nabla B(s)|^2 dx\right] \exp \left\{ C\int_s^T (\|u\|_{L^\infty}^2 +  \|B\|_{L^\infty}^2) d\tau \right \} + C.
\ea \ee

Denote \be \la{first-level-9} \Psi (t) = e+\sup_{0\leq \tau \leq t} \left( \|u(\tau)\|_{H^1}^2 + \|B(\tau)\|_{H^1}^2\right) + \int_0^t \left( \|\sqrt{\rho}u_t\|_{L^2}^2  + \|B_t\|_{L^2}^2 \right) d\tau , \ee
then \eqref{first-level-8} and \eqref{energy-estimate} give that for every $0\leq s < T < T^*$,
\be\la{first-level-10}
\Psi(T) \leq C \Psi(s) \exp \left\{ C\int_s^T (\|u\|_{L^\infty}^2 +  \|B\|_{L^\infty}^2) d\tau \right \}.
\ee

To get a proper estimate for $\|u\|_{L_t^2 L_x^\infty}$ and $\|B\|_{L^2_t L_x^\infty}$, we get help from Lemma \ref{critical-inequality-lemma}.
\be \la{first-level-11} \ba
& \|u\|_{L^2(s, T; L^\infty) }^2  + \|B\|_{L^2(s, T; L^\infty)}^2 \\ \leq & C  \left\{ 1 + (\|u\|_{L^2(s, T; H^1)}^2 + \|B\|_{L^2(s, T; H^1)}^2 ) \left(\ln^+ \|u\|_{L^2(s, T; W^{1,4})}  + \ln^+ \|B\|_{L^2(s, T; W^{1,4})}\right)\right\}.
\ea
\ee
Applying Lemma \ref{Galdi} to the equation $\eqref{MHD-new}_2$ yields
\be\la{first-level-12}
\|u\|_{W^{1,4}} \leq C \|u\|_{H^1} + C \|\rho u_t \|_{L^{\frac43}} + C \|(\rho u\cdot \nabla)u - (B\cdot \nabla )B\|_{L^{\frac43}} ,
\ee
which implies
\be\la{first-level-13} \ba
\|u\|_{L^2(s, T; W^{1,4})}  \leq & C \|u\|_{L^2(s, T; H^1)} + C\|\sqrt{\rho} u_t\|_{L^2(s, T; L^2)}  \\ & + C \|u\|_{L^2(s, T; H^1)} \|\nabla u\|_{L^\infty(s, T; L^2)}
+ C \|B\|_{L^2(s, T; H^1)} \|\nabla B\|_{L^\infty(s, T; L^2)}. \ea \ee
Similarly, applying Lemma \ref{Nirenberg} to the equation $\eqref{MHD-new}_3$ to obtain
\be \la{first-level-14} \ba
 \|B\|_{L^2(s, T; W^{1,4})}  \leq & C \|B\|_{L^2(s, T; H^1)} + C\|B_t\|_{L^2(s, T; L^2)}  \\ & + C \|u\|_{L^2(s, T; H^1)} \|\nabla B\|_{L^\infty(s, T; L^2)}
+ C \|B\|_{L^2(s, T; H^1)} \|\nabla u\|_{L^\infty(s, T; L^2)}.
\ea \ee
Note that the constant $C$ in \eqref{first-level-13} and \eqref{first-level-14} does not depend on $u$, $B$, $s$ or $T$. It only depends on the domain $\Omega$. Taking the energy inequality
\eqref{energy-estimate} into consideration, then for every $0\leq s<T< T^*$,
\be\la{first-level-15}\ba
& \|u\|_{L^2(s, T; L^\infty)}^2 + \|B\|_{L^2(s, T; L^\infty)}^2 \\ \leq  & C_2 \left\{1 +( \|u\|_{L^2(s, T; H^1)}^2 + \|B\|_{L^2(s, T; H^1)}^2 )\ln \left(  C(M, T^*) \Psi(T)   \right) \right\},
\ea
\ee
where $C_2$ is constant which only depends on $\Omega$, and  $C(M, T^*)$ is a constant depending on $M$ in \eqref{energy-estimate} and $T^*$.

Substituting \eqref{first-level-15} into \eqref{first-level-10}, it arrives at
\be\la{first-level-16}
\Psi(T)  \leq  C \Psi(s) \left[ C(M, T^*) \Psi(T) \right]^{C_2 \left( \|u\|_{L^2(s, T; H^1)}^2 + \|B\|_{L^2(s, T; H^1)}^2 \right)}.
\ee
Recall the energy estimate (\ref{energy-estimate}), one can choose $s$ close enough to $T^*$, such that
\be \lim_{T\rightarrow T^*} C_2\left( \|u\|_{L^2(s, T; H^1)}^2 + \|B\|_{L^2(s, T; H^1)}^2 \right)  \leq \frac12, \ee
then for every $s<T< T^*$, we have
\be
\Psi(T) \leq  C \Psi(s)^2 \cdot C(M, T^*)^2,
\ee
which completes the proof of Proposition \ref{First-Level}.
\end{proof}

\br Unfortunately, we can not get any explicit bound for $\|(u, B)\|_{H^1}$ in terms of the initial data, due to the technique used here.
\er

We have some more estimates as corollaries of Proposition \ref{First-Level}.
\begin{pro}\la{first-level-add-1} Assume that \be\la{assumption} \sup_{0< T< T^*} \left\{ \|(u(T), B(T))\|_{H^1}^2 + \int_0^T \|(\sqrt{\rho}u_t, B_t)\|_{L^2}^2 dt\right\} \leq C_3.\ee
Then there exists a constant $C_4$ depending on $C_3$, such that
\be \la{first-level-11-1}
\sup_{0< T<T^*} \left\{ \|u\|_{L^2(0, T; H^2) }+ \|B\|_{L^2 (0, T; H^2)} \right\} \leq C_4.
\ee
\end{pro}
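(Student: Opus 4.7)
The plan is to apply the elliptic regularity results Lemma \ref{Galdi} and Lemma \ref{Nirenberg} pointwise in time to the momentum and magnetic equations, and then integrate in time. Viewing $\eqref{MHD-new}_2$ as a stationary Stokes system with forcing $F_u = -\rho u_t - (\rho u\cdot\nabla)u + (B\cdot\nabla)B$, and $\eqref{MHD-new}_3$ as a Poisson-type system (modulo the $\curl$ boundary condition) with forcing $F_B = B_t + (u\cdot\nabla)B - (B\cdot\nabla)u$, I obtain
\be\la{plan-ellip}
\|u(t)\|_{H^2}^2 \le C\|F_u(t)\|_{L^2}^2 + C\|u(t)\|_{H^1}^2,\qquad \|B(t)\|_{H^2}^2 \le C\|F_B(t)\|_{L^2}^2 + C\|B(t)\|_{H^1}^2.
\ee
The contributions $\|\rho u_t\|_{L^2}^2 \le \|\rho_0\|_{L^\infty}\|\sqrt{\rho}u_t\|_{L^2}^2$ and $\|B_t\|_{L^2}^2$ are, after integration in time, controlled directly by the hypothesis \eqref{assumption}, so only the quadratic convective and Lorentz terms require care.

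The main tool for those is the two-dimensional Gagliardo--Nirenberg inequality $\|f\|_{L^4}^2 \le C\|f\|_{L^2}\|\nabla f\|_{L^2}$. Applied to each factor, this gives bounds of the schematic form
\be\la{plan-GN}
\|(\rho u\cdot\nabla)u\|_{L^2}^2 \le C\|u\|_{L^4}^2\|\nabla u\|_{L^4}^2 \le C\|u\|_{H^1}^3\|u\|_{H^2},
\ee
and similarly $\|(B\cdot\nabla)B\|_{L^2}^2 \le C\|B\|_{H^1}^3\|B\|_{H^2}$, $\|(u\cdot\nabla)B\|_{L^2}^2 \le C\|u\|_{H^1}^2\|B\|_{H^1}\|B\|_{H^2}$, $\|(B\cdot\nabla)u\|_{L^2}^2 \le C\|B\|_{H^1}^2\|u\|_{H^1}\|u\|_{H^2}$. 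Invoking the hypothesis \eqref{assumption} to replace every $\|u\|_{H^1}$ and $\|B\|_{H^1}$ by $\sqrt{C_3}$, each of these bounds becomes at most $C(C_3)(\|u\|_{H^2}+\|B\|_{H^2})$.

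Plugging these into \eqref{plan-ellip} and adding the two inequalities yields
\be
\|u(t)\|_{H^2}^2 + \|B(t)\|_{H^2}^2 \le C(C_3)\bigl(\|\sqrt{\rho}u_t\|_{L^2}^2+\|B_t\|_{L^2}^2+1\bigr) + C(C_3)\bigl(\|u(t)\|_{H^2}+\|B(t)\|_{H^2}\bigr),
\ee
and a single application of Young's inequality absorbs the last term into the left-hand side, leaving
\be
\|u(t)\|_{H^2}^2 + \|B(t)\|_{H^2}^2 \le C(C_3)\bigl(\|\sqrt{\rho}u_t(t)\|_{L^2}^2+\|B_t(t)\|_{L^2}^2+1\bigr).
\ee
Integrating from $0$ to $T<T^*$ and using the hypothesis \eqref{assumption} together with $T<T^*<\infty$ gives $\|u\|_{L^2(0,T;H^2)}^2+\|B\|_{L^2(0,T;H^2)}^2 \le C(C_3)(C_3+T^*) =: C_4^2$, which is the claim.

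The only mildly delicate point is that the bilinear terms couple $u$ and $B$ through $\|u\|_{H^2}$ and $\|B\|_{H^2}$, so one must add the two elliptic inequalities \emph{before} absorbing by Young's inequality rather than trying to handle the two equations separately; apart from this bookkeeping the argument is entirely routine, consisting only of the standard 2D embedding together with Lemmas \ref{Galdi} and \ref{Nirenberg}.
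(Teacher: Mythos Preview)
Your argument is correct and follows essentially the same route as the paper's proof: apply Lemmas~\ref{Galdi} and~\ref{Nirenberg} pointwise in $t$, control the bilinear terms by a 2D interpolation inequality so that only a half power of $\|(u,B)\|_{H^2}$ appears on the right, absorb, and integrate. The one cosmetic difference is that the paper estimates the convective terms via $\|f\|_{L^\infty}\le C\|f\|_{L^2}^{1/2}\|f\|_{H^2}^{1/2}$ (writing, e.g., $\|(\rho u\cdot\nabla)u\|_{L^2}\le C\|u\|_{L^\infty}\|\nabla u\|_{L^2}$), whereas you use the Ladyzhenskaya $L^4$ inequality on both factors; either choice yields the same absorbable structure, and your observation that the two elliptic estimates must be summed before absorbing is exactly the bookkeeping the paper carries out in \eqref{first-level-14-1}.
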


\begin{proof} The equation $\eqref{MHD-new}_2$, together with Lemma \ref{Galdi}, gives us that
\be\la{first-level-12-1}\ba
\|u\|_{H^2} &  \leq C\|u\|_{H^1} + C\|\rho u_t\|_{L^2} + C\|(\rho u \cdot \nabla ) u\|_{L^2} + C\| (B\cdot \nabla ) B\|_{L^2}\\
& \leq C\|u\|_{H^1}+ C\|\sqrt{\rho}u_t\|_{L^2} + C\|u\|_{L^\infty} \|\nabla u\|_{L^2} + C \|B\|_{L^\infty} \|\nabla B\|_{L^2}.
\ea\ee
Similarly, by Lemma \ref{Nirenberg},
\be \la{first-level-13-1}\ba
\|B\|_{H^2} \leq C\|B\|_{H^1} + C\|B_t\|_{L^2} + C\|u\|_{L^\infty}\|\nabla B\|_{L^2} + C\|B\|_{L^\infty} \|\nabla u\|_{L^2}. \ea
\ee
Combining the two inequalities \eqref{first-level-12-1} and \eqref{first-level-13-1}, we have
\be\la{first-level-14-1}
\ba
& \|u\|_{H^2} + \|B\|_{H^2} \\ \leq &  C \|\sqrt{\rho} u_t\|_{L^2} + C\|B_t\|_{L^2} + C \left(\|u\|_{L^\infty} + \|B\|_{L^\infty} + 1 \right)\cdot \left(\| u\|_{H^1} + \| B\|_{H^1}\right)\\
\leq &  C\left(\|u\|_{H^2}  + \|B\|_{H^2}\right)^{1/2} \left(\|u\|_{L^2}+ \|B\|_{L^2}\right)^{1/2}  \cdot \left(\| u\|_{H^1} + \|B\|_{H^1}\right)\\
&\  + C \left(  \| u\|_{H^1} + \|B\|_{H^1} \right)+ C \|\sqrt{\rho} u_t\|_{L^2} + C\|B_t\|_{L^2}.
\ea\ee
where Gagliardo-Nirenberg inequality was used. Hence,
\be\la{first-level-15-1}
\|u\|_{H^2}+ \|B\|_{H^2} \leq C\|\sqrt{\rho} u_t\|_{L^2} + C\|B_t\|_{L^2} + C \left(1+ \|u\|_{H^1} + \|B\|_{H^1}\right)^3,
\ee
which completes the proof for \eqref{first-level-11-1}.
\end{proof}

\begin{pro}\la{first-level-add-2}Assume \eqref{assumption} holds, then there exists some constant $C_5$ depending on $C_3$ such that
\be \la{first-level-16-1}  \sup_{0<T < T^*} \left\{ \|u\|_{L^4(0, T; L^\infty)} + \|B\|_{L^4(0, T; L^\infty)} \right\} \leq C_5.
\ee
\end{pro}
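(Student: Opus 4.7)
The plan is to combine a two-dimensional Agmon-type interpolation with the $L^2(0,T;H^2)$ control of $(u,B)$ already secured in Proposition \ref{first-level-add-1}. Specifically, in a bounded smooth planar domain one has the interpolation
\[
\|f\|_{L^\infty(\Omega)}^2 \leq C\|f\|_{H^1(\Omega)}\|f\|_{H^2(\Omega)},
\]
which follows from standard Gagliardo--Nirenberg--Agmon interpolation (or, equivalently, from interpolating the embedding $H^{1+\varepsilon}\hookrightarrow L^\infty$ between $H^1$ and $H^2$). I would apply this pointwise in $t$ to $f=u$ and $f=B$, raise to the fourth power, and integrate on $(0,T)$ to obtain
\[
\int_0^T \|f\|_{L^\infty}^4 \,dt \leq C\sup_{0\leq t\leq T}\|f\|_{H^1}^2 \int_0^T \|f\|_{H^2}^2\,dt.
\]

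The first factor on the right is bounded by $C_3$ by the standing hypothesis \eqref{assumption}, and the second factor is bounded by $C_4^2$ by Proposition \ref{first-level-add-1}. Summing the inequalities for $u$ and $B$ yields
\[
\|u\|_{L^4(0,T;L^\infty)}^4 + \|B\|_{L^4(0,T;L^\infty)}^4 \leq C\, C_3\, C_4^2,
\]
so the claimed constant $C_5$ can be taken to depend only on $C_3$ (since $C_4$ depends only on $C_3$ and on $\Omega,T^*$ and the initial data through the generic constant $C$).

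No serious obstacle arises: both ingredients --- the pointwise Agmon interpolation and the $L^2_t H^2_x$ integrability of $(u,B)$ --- are already in hand. The one point to keep track of is that the bound on $\int_0^T \|(u,B)\|_{H^2}^2\,dt$ supplied by Proposition \ref{first-level-add-1} is uniform on $0<T<T^*$, which ensures that the resulting $C_5$ is likewise uniform in $T$. Alternatively, one could invoke the logarithmic inequality of Lemma \ref{critical-inequality-lemma}, but this would be overkill here since no borderline blow-up needs to be absorbed.
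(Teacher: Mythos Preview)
Your proof is correct and follows essentially the same approach as the paper: interpolate $L^\infty$ between a low-order norm controlled uniformly in time and $H^2$ controlled in $L^2_t$, then integrate. The only cosmetic difference is that the paper uses the Gagliardo--Nirenberg form $\|f\|_{L^\infty}\leq C\|f\|_{L^2}^{1/2}\|f\|_{H^2}^{1/2}$ in place of your Agmon-type $\|f\|_{L^\infty}^2\leq C\|f\|_{H^1}\|f\|_{H^2}$, which is immaterial here.
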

\begin{proof}
By Gagliardo-Nirenberg inequality,
\be
\|u\|_{L^\infty} \leq C\|u\|_{L^2}^{1/2} \cdot \|u \|_{H^2}^{1/2},
\ee
and
\be
\|B\|_{L^\infty} \leq C \|B\|_{L^2}^{1/2} \cdot \|B\|_{H^2}^{1/2},
\ee
which together with \eqref{first-level-11-1} completes the proof for \eqref{first-level-16-1}.
\end{proof}

\vspace{2mm} {\bf Step IV\ \ Estimates for $\|(\sqrt{\rho}u_t, \ B_t)\|_{L^\infty(0, T; L^2)}$ and $\|(\nabla u_t, \nabla B_t)\|_{L^2(0, T; L^2)}$} From now on, the estimates are standard, due to the
proof in \cite{Kim}. We write them down here for completeness.
\begin{pro}Under the assumptions in Theorem \ref{main-result}, it holds that
 \la{Second-Level}
\be\la{second-level}
\sup_{0<T<T^*}\left \{\|(\sqrt{\rho} u_t(T), \ B_t(T))\|_{H^1} + \int_0^T \|(\nabla u_t, \ \nabla B_t)\|_{L^2}^2 dt \right\}< \infty.
\ee
\end{pro}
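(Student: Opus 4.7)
The plan is to derive a higher-order energy estimate by differentiating $\eqref{MHD-new}_2$ and $\eqref{MHD-new}_3$ in time, testing against $u_t$ and $B_t$, combining the two balances, and closing via Gronwall using the first-level bounds from Step III. One first checks that $\|\sqrt{\rho_0}u_t(0)\|_{L^2}^2 + \|B_t(0)\|_{L^2}^2$ is finite: from the compatibility condition \eqref{compatibility-conditions} we get $\sqrt{\rho_0}u_t(0) = -g - \sqrt{\rho_0}(u_0\cdot\nabla)u_0 \in L^2$, while $B_t(0) = \Delta B_0 - (u_0\cdot\nabla)B_0 + (B_0\cdot\nabla)u_0 \in L^2$ since $u_0, B_0 \in H^2$.

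Differentiating $\eqref{MHD-new}_2$ in $t$, pairing with $u_t$, and using the identity $\int\rho u_{tt}\cdot u_t + \int\rho(u\cdot\nabla)u_t\cdot u_t = \tfrac12\tfrac{d}{dt}\int\rho|u_t|^2$ (a direct consequence of the continuity equation and $u_t|_{\partial\Omega}=0$), one obtains
\begin{equation*}
\begin{aligned}
\tfrac12\tfrac{d}{dt}\|\sqrt\rho\,u_t\|_{L^2}^2 + \|\nabla u_t\|_{L^2}^2 = &-\int\rho_t|u_t|^2\,dx - \int[(\rho u)_t\cdot\nabla]u\cdot u_t\,dx \\
&+ \int(B_t\cdot\nabla)B\cdot u_t\,dx + \int(B\cdot\nabla)B_t\cdot u_t\,dx,
\end{aligned}
\end{equation*}
and a parallel calculation from $\eqref{MHD-new}_3$ yields
\begin{equation*}
\begin{aligned}
\tfrac12\tfrac{d}{dt}\|B_t\|_{L^2}^2 + \|\nabla B_t\|_{L^2}^2 = &-\int(u_t\cdot\nabla)B\cdot B_t\,dx + \int(B_t\cdot\nabla)u\cdot B_t\,dx \\
&+ \int(B\cdot\nabla)u_t\cdot B_t\,dx.
\end{aligned}
\end{equation*}
Adding the two balances, the coupling pair $\int(B\cdot\nabla)B_t\cdot u_t + \int(B\cdot\nabla)u_t\cdot B_t = -\int\div B\,(B_t\cdot u_t)\,dx = 0$ vanishes, with both interior and boundary contributions killed by $\div B = 0$ and $B\cdot\vec n = 0$ on $\partial\Omega$.

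The surviving nonlinear terms are estimated by H\"older, the two-dimensional Ladyzhenskaya inequality $\|f\|_{L^4}^2 \le C\|f\|_{L^2}\|\nabla f\|_{L^2}$, and the bounds already secured in Propositions \ref{First-Level}--\ref{first-level-add-2}, namely $\|\rho\|_{L^\infty}\le C$, $\|(u,B)\|_{L^\infty_tH^1_x}\le C$, $\|(u,B)\|_{L^2_tH^2_x}\le C$, and $\|(u,B)\|_{L^4_tL^\infty_x}\le C$. For instance, $|\int(B_t\cdot\nabla)B\cdot u_t|\le C\|B_t\|_{L^4}\|\nabla B\|_{L^4}\|u_t\|_{L^4}$ splits via Ladyzhenskaya and Young into a small multiple of $\|\nabla u_t\|_{L^2}^2+\|\nabla B_t\|_{L^2}^2$ (absorbed on the left) plus a time-integrable multiplier of $\|\sqrt\rho u_t\|_{L^2}^2+\|B_t\|_{L^2}^2$; the term $\int[(\rho u)_t\cdot\nabla]u\cdot u_t$ is treated in the same spirit after expanding $(\rho u)_t = \rho u_t - \div(\rho u)\,u$. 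Summing leads to
\begin{equation*}
\frac{d}{dt}\!\left(\|\sqrt\rho\,u_t\|_{L^2}^2+\|B_t\|_{L^2}^2\right) + \|\nabla u_t\|_{L^2}^2 + \|\nabla B_t\|_{L^2}^2 \le h(t)\!\left(\|\sqrt\rho\,u_t\|_{L^2}^2+\|B_t\|_{L^2}^2\right) + k(t),
\end{equation*}
with $h,k\in L^1(0,T^*)$, whence Gronwall's inequality yields \eqref{second-level}.

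The main obstacle I anticipate is the careful bookkeeping required to ensure that the multiplier $h(t)$ is integrable on $[0,T^*)$: it will involve $\|(u,B)\|_{L^\infty}^2$, $\|\nabla u\|_{L^4}^2$ and similar high-order norms, all of which are controlled in time only through the first-level bounds $\|(u,B)\|_{L^2_tH^2_x}$ and $\|(u,B)\|_{L^4_tL^\infty_x}$ obtained via the logarithmic Sobolev inequality of Lemma \ref{critical-inequality-lemma}. Once that input is accepted, the structure is essentially the same as the Navier--Stokes argument of \cite{Kim}, with the magnetic coupling terms benignly cancelling as noted above.
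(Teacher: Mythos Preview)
Your proposal is correct and follows essentially the same route as the paper: differentiate $\eqref{MHD-new}_2$ and $\eqref{MHD-new}_3$ in time, test with $u_t$ and $B_t$, bound each nonlinear term using the first-level estimates of Propositions \ref{First-Level}--\ref{first-level-add-2}, and close by Gronwall. The one pleasant addition is your observation that the coupling pair $\int(B\cdot\nabla)B_t\cdot u_t + \int(B\cdot\nabla)u_t\cdot B_t$ cancels exactly via $\div B=0$ and $B\cdot\vec n=0$; the paper does not exploit this and simply estimates each magnetic cross term separately, so your version is marginally cleaner but otherwise identical in spirit.
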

\begin{proof} Taking $t$-derivative of the equation $\eqref{MHD-new}_2$, then one gets that
\be\la{second-level-1}\ba
&\rho u_{tt} + (\rho u\cdot \nabla ) u_t - \Delta u_t + \nabla P_t \\ & = -\rho_t u_t - (\rho_t u \cdot \nabla ) u  - (\rho u_t \cdot \nabla ) u + (B_t \cdot \nabla ) B + (B\cdot \nabla ) B_t.
\ea \ee
Multiplying \eqref{second-level-1} by $u_t$ and integrating over $\Omega$,
\be\la{second-level-2}\ba
& \frac12 \frac{d}{dt} \int \rho |u_t|^2 dx + \int |\nabla u_t|^2 dx
= - \int \rho_t |u_t|^2 dx - \int (\rho_t u \cdot \nabla )u \cdot u_t dx \\ & \ \ \ \  - \int (\rho u_t \cdot \nabla )u \cdot u_t dx + \int(B_t \cdot \nabla )B\cdot u_t dx + \int (B\cdot \nabla) B_t \cdot u_t dx.
\ea
\ee
We estimate the terms on the right hand one by one. Taking $\eqref{MHD}_1$ into consideration, we get that
\be\la{second-level-3}\ba
-\int \rho_t |u_t|^2 dx  & = \int {\rm div}(\rho u) |u_t|^2 dx \\
& = -\int 2\rho u \cdot \nabla u_t \cdot u_t dx \\
& \leq \frac18 \|\nabla u_t \|_{L^2}^2 + C\|\sqrt{\rho}u_t\|_{L^2}^2 \|u\|_{L^\infty}^2,
\ea \ee
and also for the second term,
\be \la{second-level-4}\ba
& -\int (\rho_t u\cdot \nabla ) u \cdot u_t dx\\
 =& -\int \rho u \cdot \nabla [(u\cdot \nabla) u\cdot u_t]dx\\
\leq & \int |\rho u_t| |u| |\nabla u|^2 dx + \int |\rho u_t | |u|^2 |\nabla ^2 u| dx + \int \rho |u|^2 |\nabla u| |\nabla u_t| dx\\
 \ea\ee
Here by Gagliardo-Nirenberg inequality,
\be\la{second-level-5}\ba
& \int |\rho u_t| |u| |\nabla u|^2 dx \\
& \leq  \|\sqrt{\rho }u_t\|_{L^2}\|u\|_{L^\infty} \|\nabla u\|_{L^4}^2 \\
& \leq C\|\sqrt{\rho}u_t \|_{L^2}\|u\|_{L^\infty}  \|\nabla u\|_{L^2} \|\nabla u\|_{H^1} \\
& \leq  \|u\|_{L^\infty}^2 \|\sqrt{\rho}u_t\|_{L^2}^2  + C\|\nabla u\|_{L^2}^2 \| u\|_{H^2}^2.
\ea \ee
By Young inequality,
\be\la{second-level-6}\ba
&\int |\rho u_t | |u|^2 |\nabla ^2 u| dx \\
& \leq C\|\sqrt{\rho} u_t\|_{L^2} \|u\|_{L^\infty}^2 \|\nabla^2 u\|_{L^2} \\
& \leq \|u\|_{L^\infty}^4 \|\sqrt{\rho} u_t\|_{L^2}^2 + C\| u\|_{H^2}^2. \\
\ea\ee
And similarly,
\be\la{second-level-7}\ba
& \int \rho |u|^2 |\nabla u| |\nabla u_t| dx\\
  \leq &   C\|u\|_{L^\infty}^2 \|\nabla u\|_{L^2}\|\nabla u_t\|_{L^2}\\
 \leq & \frac18 \|\nabla u_t\|_{L^2}^2 + C\|u\|_{L^\infty}^4 \|\nabla u\|_{L^2}^2.
\ea \ee

For the third term of the right hand of \eqref{second-level-2}, by Poincar\'{e} inequality and Gagliardo-Nirenberg inequality,
\be\la{second-level-8} \ba  & -\int (\rho u_t \cdot \nabla ) u \cdot u_t dx \\
 \leq  &C\|\sqrt{\rho} u_t \|_{L^2} \|\nabla u\|_{L^4} \|u_t\|_{L^4} \\
 \leq & C \|u\|_{H^2}^2 \|\sqrt{\rho}u_t\|_{L^2}^2 + \frac18 \|\nabla u_t\|_{L^2}^2.
\ea \ee

Since ${\rm div}B_t =0 $ in $\Omega$ and $B_t \cdot \vec{n}= 0$ on $\partial \Omega$, then
\be \la{second-level-9} \ba & \int (B_t \cdot \nabla ) B \cdot u_t dx\\
 = & -\int (B_t \cdot \nabla) u_t \cdot B dx\\
 \leq & \frac18 \|\nabla u_t \|_{L^2 }^2 + C \|B\|_{L^\infty}^2 \|B_t\|_{L^2}^2.
\ea \ee

And similarly,
\be \la{second-level-10} \ba & \int (B\cdot \nabla ) B_t \cdot u_t dx \\
\leq &  \frac18 \|\nabla u_t\|_{L^2}^2+ C\|B\|_{L^\infty}^2 \|B_t\|_{L^2}^2.
\ea \ee

Now we turn to the equation for $B$. Taking $t$-derivative of $\eqref{MHD-new}_3$, multiplying by $B_t$ and integrating over $\Omega$, then
\be \la{second-level-12} \ba & \frac12 \frac{d}{dt} \int |B_t|^2dx + \int |\nabla B_t|^2 dx \\
= & -\int (u_t \cdot \nabla ) B\cdot B_t dx +  \int (B_t \cdot \nabla) u\cdot B_t dx + \int (B\cdot \nabla ) u_t \cdot B_t dx\\
\ea \ee
Here Poincar\'{e} inequality gives that
\be \la{second-level-13}\ba &-\int (u_t \cdot \nabla ) B\cdot B_t dx \\
\leq & \|u_t\|_{L^4}\|\nabla B\|_{L^4} \|B_t\|_{L^2}\\
\leq &  \frac18 \|\nabla u_t\|_{L^2}^2 + C \|\nabla B\|_{H^1}^2\|B_t\|_{L^2}^2 .
\ea\ee
Gagliardo-Nirenberg inequality gives that
\be \la{second-level-14}\ba  &\int (B_t \cdot \nabla ) u\cdot B_t dx\\
\leq & \|B_t\|_{L^4}^2 \|\nabla u\|_{L^2}\\
\leq & \frac18 \| B_t\|_{H^1}^2 + C\|\nabla u\|_{L^2}^2 \|B_t\|_{L^2}^2.
\ea \ee
And H\"older's inequality gives that
\be\la{second-level-15}\ba & \int (B\cdot \nabla ) u_t \cdot B_t dx\\
\leq & \frac18\|\nabla u_t\|_{L^2}^2 + C \|B\|_{L^\infty}^2 \|B_t\|_{L^2}^2.
\ea\ee

Collecting all the estimates \eqref{second-level-2}-\eqref{second-level-15} and taking Proposition \ref{First-Level}, \ref{first-level-add-1}, \ref{first-level-add-2} into account,  we get that
\be \la{second-level-16} \ba & \frac12 \frac{d}{dt}  \int |\sqrt{\rho}u_t |^2 dx + \frac12 \frac{d}{dt} \int |B_t|^2 dx + \frac14 \int |\nabla u_t|^2 dx +  \frac14\int |\nabla B_t|^2 dx
\\  \leq &  C(1+ \|u\|_{L^\infty}^4 + \|B\|_{L^\infty}^2 + \|u\|_{H^2}^2 + \|B\|_{H^2}^2) (\|\sqrt{\rho} u_t\|_{L^2}^2 + \|B_t\|_{L^2}^2 )\\
& + C \|\nabla u\|_{L^2}^2 \|u\|_{H^2}^2 + C \|u\|_{L^\infty}^4 \|\nabla u\|_{L^2}^2 ,
\ea\ee
which together with Gronwall's inequality completes the proof of Proposition \ref{Second-Level}.

\end{proof}

As a corollary, we can bound $\|u\|_{L^2_t W^{2,4}_x}$, which will play an important role in the estimates for $\rho$.
\begin{pro}\la{second-level-corollary-1}  Under the assumptions of Theorem \ref{main-result}, it holds that \be \la{second-level-corollary}
\sup_{0< T<  T^*}  \left\{ \|u\|_{L^2(0, T; W^{2,4}) }  \right\} < \infty.
\ee
\end{pro}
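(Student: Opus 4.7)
The plan is to recast the momentum equation $\eqref{MHD-new}_2$ as the stationary Stokes system
\[
-\Delta u + \nabla P = -\rho u_t - (\rho u\cdot\nabla)u + (B\cdot\nabla)B,
\]
and then apply Lemma \ref{Galdi} with the exponent $q=4$ (legitimate since $4>2$) to obtain, pointwise in time,
\[
\|u\|_{W^{2,4}} \leq C\|u\|_{H^1} + C\bigl(\|\rho u_t\|_{L^4} + \|(\rho u\cdot\nabla)u\|_{L^4} + \|(B\cdot\nabla)B\|_{L^4}\bigr).
\]
After squaring and integrating on $(0,T)$, the task reduces to bounding each term on the right-hand side in $L^2(0,T)$, uniformly in $T<T^*$.

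For the convective pieces I would use the two-dimensional embedding $H^2\hookrightarrow L^\infty$ together with the Gagliardo--Nirenberg inequality $\|\nabla u\|_{L^4}\leq C\|u\|_{H^2}$ to obtain $\|(\rho u\cdot\nabla)u\|_{L^4}\leq C\|\rho\|_{L^\infty}\|u\|_{H^2}^2$ and, in the same way, $\|(B\cdot\nabla)B\|_{L^4}\leq C\|B\|_{H^2}^2$. The estimate \eqref{first-level-15-1} combined with the uniform control of $\|\sqrt{\rho}u_t\|_{L^\infty(0,T;L^2)}$ and $\|B_t\|_{L^\infty(0,T;L^2)}$ supplied by Proposition \ref{Second-Level}, together with the $H^1$-bounds of Proposition \ref{First-Level}, then force $\|u\|_{L^\infty(0,T;H^2)}$ and $\|B\|_{L^\infty(0,T;H^2)}$ to be bounded uniformly on $[0,T^*)$; consequently these two terms contribute at most $C(T^*)^{1/2}$ to the quantity in \eqref{second-level-corollary}.

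For the term involving $\rho u_t$, I rely on the $L^\infty$-bound on $\rho$ from Step I, the two-dimensional embedding $H^1\hookrightarrow L^4$, and Poincaré's inequality, which applies because $u|_{\partial\Omega}=0$ forces $u_t|_{\partial\Omega}=0$; these together give $\|\rho u_t\|_{L^4}\leq C\|u_t\|_{H^1}\leq C\|\nabla u_t\|_{L^2}$. Squaring and integrating,
\[
\int_0^T \|\rho u_t\|_{L^4}^2\,dt \leq C\int_0^T \|\nabla u_t\|_{L^2}^2\,dt,
\]
and the right-hand side is finite on $[0,T^*)$ by Proposition \ref{Second-Level}. The remaining $\|u\|_{H^1}$ contribution in the Stokes estimate is uniformly bounded by Proposition \ref{First-Level}, completing the argument.

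I do not expect any genuine obstacle here: the proposition is essentially a bookkeeping corollary of the estimates already established. The only subtle point is the coupling between the choice $q=4$ in Lemma \ref{Galdi} and the $L^4$ estimates for the forcing; this matches exactly the threshold $q>2$ required by the Stokes regularity lemma, and with this choice every term closes directly from the bounds supplied by Propositions \ref{First-Level}--\ref{Second-Level}.
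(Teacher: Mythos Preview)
Your proposal is correct and follows essentially the same approach as the paper: apply Lemma~\ref{Galdi} with $q=4$ to the momentum equation, control $\|\rho u_t\|_{L^4}$ via $\|\nabla u_t\|_{L^2}$ and Proposition~\ref{Second-Level}, and bound the convective terms using the already-established regularity of $u$ and $B$. The only minor difference is that for the convective pieces you invoke the uniform $L^\infty_t H^2_x$ bound (which you correctly extract from \eqref{first-level-15-1} and Proposition~\ref{Second-Level}---this is exactly Proposition~\ref{second-level-corollary-2} in the paper), whereas the paper instead interpolates $\|u\|_{L^\infty}\|\nabla u\|_{L^4}\leq C\|u\|_{L^\infty}\|\nabla u\|_{L^2}^{1/2}\|u\|_{H^2}^{1/2}$ and closes using only $\|u\|_{L^4_t L^\infty_x}$ and $\|u\|_{L^2_t H^2_x}$ from Propositions~\ref{first-level-add-1}--\ref{first-level-add-2}; both routes are equivalent and rely on the same pool of estimates.
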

\begin{proof}  It follows from Lemma \ref{Galdi} that
\be \ba  &\ \ \ \ \ \|u\|_{W^{2,4}} \\
&  \leq C\|u\|_{H^1} + C \|\rho u_t\|_{L^4} + C\|(\rho u\cdot \nabla) u\|_{L^4} + C\|(B\cdot \nabla) B\|_{L^4}\\
& \leq C\|u\|_{H^1} + C \|\nabla u_t\|_{L^2} + C\|u\|_{L^\infty} \|\nabla u\|_{L^4} + C\|B\|_{L^\infty}\|\nabla B\|_{L^4}\\
& \leq C\|u\|_{H^1} + C\|\nabla u_t\|_{L^2} +C \|u\|_{L^\infty} \|\nabla u\|_{L^2}^{1/2} \|u\|_{H^2}^{1/2} + C\|B\|_{L^\infty}\|\nabla B\|_{L^2}^{1/2} \|B\|_{H^2}^{1/2},
\ea\ee
which finishes the proof of \eqref{second-level-corollary}.

\end{proof}

Furthermore, we have the following proposition.
\begin{pro} Under the assumptions of Theorem \ref{main-result}, it holds that
\la{second-level-corollary-2} \be \sup_{0< T< T^*} \left\{ \|u\|_{H^2}+ \|B\|_{H^2} \right\} < \infty. \ee
\end{pro}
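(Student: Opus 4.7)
The plan is to observe that this proposition is essentially a direct repackaging of estimates already established, requiring no new analytical input. The key ingredient is the pointwise-in-time elliptic bound derived inside the proof of Proposition \ref{first-level-add-1} (specifically, inequality (3.15)):
\be\la{repack}
\|u(t)\|_{H^2} + \|B(t)\|_{H^2} \leq C\|\sqrt{\rho}u_t(t)\|_{L^2} + C\|B_t(t)\|_{L^2} + C\bigl(1+\|u(t)\|_{H^1}+\|B(t)\|_{H^1}\bigr)^3.
\ee
I would first reproduce (or invoke) this bound: view the momentum equation $\eqref{MHD-new}_2$ at each fixed $t$ as a stationary Stokes system with forcing $-\rho u_t - (\rho u\cdot\nabla)u + (B\cdot\nabla)B$ and apply Lemma \ref{Galdi}; view the magnetic equation $\eqref{MHD-new}_3$ as a Poisson problem with forcing $B_t + (u\cdot\nabla)B - (B\cdot\nabla)u$ and apply Lemma \ref{Nirenberg}; then use Gagliardo--Nirenberg to interpolate $\|u\|_{L^\infty}\|\nabla u\|_{L^2}$ (and its $B$-analogue) and absorb the highest-order terms, leaving the cubic $H^1$ remainder as in \eqref{repack}.

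Next, I would take the supremum in $t\in[0,T)$ on both sides of \eqref{repack}. By Proposition \ref{First-Level} the quantity $\sup_{t<T^*}(\|u(t)\|_{H^1}+\|B(t)\|_{H^1})$ is finite, so the cubic remainder is uniformly bounded on $[0,T^*)$. It remains only to bound $\sup_{t<T^*}(\|\sqrt{\rho}u_t(t)\|_{L^2}+\|B_t(t)\|_{L^2})$; this is supplied by Proposition \ref{Second-Level}, whose proof integrates the differential inequality (3.40) via Gronwall on $[0,t]$ and thereby furnishes an $L^\infty_t L^2_x$ estimate for $\sqrt{\rho}u_t$ and $B_t$ (the integrability of the coefficient on the right-hand side of (3.40) being a consequence of $u,B\in L^4_t L^\infty_x$ from Proposition \ref{first-level-add-2} and $u,B\in L^2_t H^2_x$ from Proposition \ref{first-level-add-1}).

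Putting everything together, the supremum of the right-hand side of \eqref{repack} over $t\in[0,T)$ is dominated by a constant depending only on the initial data and $T^*$, giving the stated conclusion. I do not anticipate any serious obstacle, since every ingredient has already been assembled in Propositions \ref{First-Level}--\ref{second-level-corollary-1}; the only mild subtlety is being careful that Proposition \ref{Second-Level} really provides the $L^\infty(0,T;L^2)$ bound (rather than merely an $L^2(0,T;L^2)$ bound) on $\sqrt{\rho}u_t$ and $B_t$, which is indeed what its Gronwall-based proof delivers.
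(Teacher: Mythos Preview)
Your proposal is correct and is essentially identical to the paper's own proof, which simply says ``If the inequality \eqref{first-level-15-1} is reconsidered, then the proof is done.'' You have just unpacked that sentence: re-use the pointwise elliptic estimate \eqref{first-level-15-1}, bound the cubic $H^1$ remainder via Proposition \ref{First-Level}, and bound $\|\sqrt{\rho}u_t\|_{L^\infty_t L^2_x}+\|B_t\|_{L^\infty_t L^2_x}$ via Proposition \ref{Second-Level}.
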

\begin{proof} If  the inequality \eqref{first-level-15} is reconsidered, then the proof is done.

\end{proof}


{\bf Step V\ \ Estimates for $\|\nabla \rho \|_{L^\infty(0, T; H^1)}$ and $ \|(u, B)\|_{L^2(0, T; H^3)}$. }
\begin{pro}Under the assumptions of Theorem \ref{main-result}, it holds that
 \la{Third-Level}
\be\la{third-level}
\sup_{0<T<T^*}\left \{ \|\rho\|_{L^\infty(0, T; H^2)} + \int_0^T \left (\|u\|_{H^3}^2 + \|B\|_{H^3}^2 \right) dt \right\}< \infty.
\ee
\end{pro}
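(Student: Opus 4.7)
The plan is to split the argument into two successive bootstraps. First, I would promote $\rho$ to $L^\infty(0,T^*;H^2)$ using the transport equation together with the bounds on $u$ already in hand from Step IV; then I would feed this improved density regularity into the Stokes system for $u$ (Lemma \ref{Galdi}) and the elliptic system for $B$ (Lemma \ref{Nirenberg}) to extract the $L^2(0,T^*;H^3)$ bounds. Throughout, the constants are allowed to depend on the bounds delivered by Propositions \ref{First-Level}--\ref{second-level-corollary-2}.

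For the density, I would apply $\nabla$ and $\nabla^2$ to the transport equation $\rho_t+u\cdot\nabla\rho=0$ and carry out the standard $L^2$ energy estimate for $\nabla^2\rho$, arriving at
\begin{equation*}
\frac{d}{dt}\|\rho\|_{H^2}^2 \leq C\|\nabla u\|_{L^\infty}\|\rho\|_{H^2}^2 + C\|\nabla^2 u\|_{L^4}\|\nabla\rho\|_{L^4}\|\nabla^2\rho\|_{L^2}.
\end{equation*}
In two dimensions, $W^{2,4}\hookrightarrow W^{1,\infty}$ and $H^2\hookrightarrow W^{1,4}$, so the right-hand side is dominated by $C(1+\|u\|_{W^{2,4}})\|\rho\|_{H^2}^2$. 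Since Proposition \ref{second-level-corollary-1} yields $u\in L^2(0,T^*;W^{2,4})$, Gronwall's inequality closes this estimate and produces $\sup_{0<T<T^*}\|\rho\|_{L^\infty(0,T;H^2)}<\infty$.

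For the velocity, I would read $\eqref{MHD-new}_2$ as a stationary Stokes system $-\Delta u+\nabla P = F$ with $F=-\rho u_t-(\rho u\cdot\nabla)u+(B\cdot\nabla)B$, and invoke Lemma \ref{Galdi} to get $\|u\|_{H^3}\leq C(\|F\|_{H^1}+\|u\|_{H^1})$. Each piece of $\|F\|_{H^1}$ is controlled by the 2D Sobolev calculus together with the bounds already established and the $\rho$-estimate just proved; the representative term is
\begin{equation*}
\|\rho u_t\|_{H^1} \leq C\|\nabla\rho\|_{L^4}\|u_t\|_{L^4}+C\|\rho\|_{L^\infty}\|\nabla u_t\|_{L^2}\leq C\|\nabla u_t\|_{L^2},
\end{equation*}
using $\|\nabla\rho\|_{L^4}\leq C\|\rho\|_{H^2}$ and the Poincaré inequality $\|u_t\|_{L^4}\leq C\|\nabla u_t\|_{L^2}$ (justified since $u|_{\partial\Omega}=0$ forces $u_t|_{\partial\Omega}=0$). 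The convective contributions $(\rho u\cdot\nabla)u$ and $(B\cdot\nabla)B$ have uniformly bounded $H^1$ norm in view of Propositions \ref{First-Level} and \ref{second-level-corollary-2}. Squaring and integrating in time and invoking Proposition \ref{Second-Level} give $u\in L^2(0,T^*;H^3)$. The magnetic field is treated identically: I would rewrite $\eqref{MHD-new}_3$ as $\Delta B = B_t+(u\cdot\nabla)B-(B\cdot\nabla)u$, apply Lemma \ref{Nirenberg}, and estimate the source in $H^1$ by the same product-rule bookkeeping, noting that the boundary conditions $B\cdot\vec n=0$ and $\mathrm{curl}\,B=0$ are preserved under $\partial_t$.

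The main obstacle is the mixed-regularity factor $\nabla\rho\cdot u_t$ hiding inside $\nabla(\rho u_t)$: neither factor admits a time-uniform $L^\infty$ control, so one cannot simply pull out an $L^\infty$ norm. The resolution is exactly the split used above, trading one derivative of $\rho$ for the $H^2$-regularity produced in the first step and trading the $L^4$-control of $u_t$ for $\|\nabla u_t\|_{L^2}$ via Poincaré; the latter is square-integrable in time by Proposition \ref{Second-Level}, which is precisely what is needed to make the final time integral finite.
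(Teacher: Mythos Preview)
Your proposal is correct and follows essentially the same route as the paper: differentiate the transport equation to control $\|\rho\|_{H^2}$ via Gronwall using the $L^2(0,T^*;W^{2,4})$ bound on $u$ from Proposition~\ref{second-level-corollary-1}, then feed this back into the Stokes and elliptic estimates of Lemmas~\ref{Galdi} and~\ref{Nirenberg} to obtain $L^2(0,T^*;H^3)$ for $u$ and $B$. Your handling of the cross term $\nabla\rho\cdot u_t$ via $\|\nabla\rho\|_{L^4}\|u_t\|_{L^4}\le C\|\rho\|_{H^2}\|\nabla u_t\|_{L^2}$ is in fact cleaner than the paper's own bookkeeping at that point.
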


\begin{proof}
Taking the $x_j$ ($j=1, 2$)-derivative of $\eqref{MHD-new}_1$,
\be\la{third-level-1}
(\rho_{x_j})_t + u\cdot \nabla \rho_{x_j} = - u_{x_j} \cdot \nabla \rho.
\ee
Multiplying the new equation by $\rho_{x_j}$, integrating over $\Omega$, and summing up, then we obtain
\be\la{third-level-2}
\frac{d}{dt} \int |\nabla \rho|^2 dx \leq C\int |\nabla u||\nabla \rho |^2 dx \leq C\|\nabla u\|_{L^\infty } \|\nabla \rho\|_{L^2}^2.
\ee
Similarly, we have the following higher order estimate for $\rho$,
\be\la{third-level-3} \ba
\frac{d}{dt} \int |\nabla^2 \rho|^2 dx  & \leq C \int \left( |\nabla u||\nabla^2 \rho|^2 + |\nabla^2 u||\nabla \rho| |\nabla^2 \rho| \right)dx \\
& \leq C \|\nabla u\|_{L^\infty} \|\nabla^2 \rho\|_{L^2}^2 + \|\nabla^2 u \|_{L^4} \|\nabla \rho\|_{L^4} \|\nabla^2 \rho\|_{L^2}.
\ea \ee
Making use of Sobolev embedding inequality and Gronwall's inequality, we get that
\be\la{third-level-4}
\|\nabla \rho(T) \|_{H^1}^2 \leq C \|\nabla \rho_0\|_{H^1}^2 \exp \left( \int_0^T C \|\nabla u(t)\|_{W^{1,4}} dt \right) < \infty.
\ee

It follows from Lemma \ref{Galdi} that
\be\la{third-level-4}\ba
\|u\|_{H^3} & \leq C\left( \|u\|_{H^1} + \|\rho u_t\|_{H^1} + \|\rho u\cdot \nabla u\|_{H^1} + \|B \cdot \nabla B\|_{H^1} \right) \\
& \leq C \left( \|u\|_{H^1} + \|\nabla \rho\|_{L^2} \|u_t\|_{L^2}+ \| u_t\|_{H^1}+\|\nabla \rho\|_{L^2} \|u\|_{L^\infty} \|\nabla u\|_{L^2}    \right) \\
&\ \ \ \ \ \ + C\left(  \|\nabla u\|_{L^2}^2 + \|u\|_{L^\infty} \|\nabla u\|_{H^1}  + \|B\|_{H^1}^2  +\|B\|_{L^\infty} \|\nabla B\|_{H^1} \right)
\ea
\ee
which implies that $\sup_{0< T< T^*} \|u\|_{L^2(0, T; H^3)} < \infty.$ Similar proof leads to the same conclusion for $B$. This completes the proof of Proposition \ref{Third-Level}.
\end{proof}

Combining all the estimates in Proposition \ref{First-Level}, \ref{Second-Level} and \ref{Third-Level}, we prove that \eqref{final} holds and complete the whole proof of Theorem \ref{main-result}.

\end{document}